\renewcommand{\geq}{\geqslant}
\renewcommand{\leq}{\leqslant}
\newtheorem{theorem}{Theorem}
\newtheorem{lemma}[theorem]{Lemma}
\newtheorem{cor}[theorem]{Corollary}
\newtheorem*{cor*}{Corollary}
\begin{document}

\title[elementary amenable groups]
{elementary amenable  groups of cohomological dimension 3}

\author{Jonathan A. Hillman }
\address{School of Mathematics and Statistics\\
     University of Sydney, NSW 2006\\
      Australia }

\email{jonathanhillman47@gmail.com}

\begin{abstract}
We show that torsion-free elementary amenable groups of Hirsch length $\leq3$ are solvable, of derived length $\leq3$.
This class includes all solvable groups of cohomological dimension $3$.
We show also that groups in the latter subclass are either polycyclic,
semidirect products $BS(1,n)\rtimes\mathbb{Z}$,
or properly ascending HNN extensions with base $\mathbb{Z}^2$ 
or $\pi_1(Kb)$.
\end{abstract}

\keywords{cohomological dimension, elementary amenable,
finitely presentable, Hirsch length, solvable, torsion-free}

\maketitle

D.Gildenhuys showed that the solvable groups of cohomological dimension at most 2 are either subgroups of the rationals $\mathbb{Q}$
or to be solvable Baumslag-Solitar groups $BS(1,m)$ \cite{Gi79}.
In particular, every such group has Hirsch length at most 2.
We show that finitely generated, torsion-free elementary amenable groups 
of Hirsch length $\leq3$ are in fact solvable minimax groups, 
of derived length $\leq3$.
We show also that such a group is finitely presentable if and only if it is constructible,
and such groups are either polycyclic,
semidirect products with base a solvable Baumslag-Solitar group,
or properly ascending HNN extensions with base $\mathbb{Z}^2$ 
or $\pi_1(Kb)$.
Our interest in this class of groups arose from recent work 
on aspherical 4-manifolds with non-empty boundary and elementary amenable fundamental group \cite{DH22}.
Such groups have cohomological dimension $\leq3$ and are of type $FP$,
and thus are in the class considered here.
(One of the results of \cite{DH22} is that the groups arising there are all 
either polycyclic or solvable Baumslag-Solitar groups, 
and so may be considered well understood.)

\section{background}

Let $G$ be a torsion-free elementary amenable group of finite Hirsch length 
$h=h(G)$.
Then $G$ is virtually solvable \cite{HL92},
and so has a subgroup of finite index which is an extension of a finitely generated free abelian group $\mathbb{Z}^v$ by a nilpotent group  \cite{Ca60}.
Since $v\leq{h}<\infty$ we may assume that $v$ is the virtual first Betti number 
of $G$,
i.e., the maximum of the ranks of abelian quotients of 
subgroups of finite index in $G$. 
If $G\not=1$ then $0<v\leq{h}=h(G)\leq{c.d.G}\leq{h+1}$.

We recall that the {\it Hirsch-Plotkin radical} $\sqrt{G}$ of a group $G$
is the (unique) maximal locally nilpotent normal subgroup of the group. 
(For the groups $G$ considered below, 
either $\sqrt{G}$ is abelian or $G$ is virtually nilpotent.)
If $G$ is solvable and $\sqrt{G}$ is abelian then $\sqrt{G}$ 
is its own centralizer in $G$ (by the maximality assumption),
and so the homomorphism from $G/\sqrt{G}$ to $Aut(\sqrt{G})$
induced by conjugation in $G$ is a monomorphism.

A solvable group is {\it minimax\/} if it has a composition series whose sections are either finite or isomorphic to $\mathbb{Z}[\frac1m]$, for some $m>0$.
A  solvable group is {\it constructible\/} if it is in the smallest class containing 
the trivial group which is closed under finite extensions and HNN extensions \cite{BB76}.
If $G$ is a torsion-free virtually solvable group group then
$c.d.G=h~\Leftrightarrow~G$ is of type $FP~\Leftrightarrow~G$ 
is constructible \cite{Kr86}.

Let $BS(m,n)$ be the Baumslag-Solitar group 
with presentation 
\[
\langle{a,t}\mid {ta^mt^{-1}=a^n}\rangle,
\]
and let $\overline{BS}(m,n)$ be the metabelian quotient
$BS(m,n)/\langle\langle{a}\rangle\rangle'$,
where $\langle\langle{a}\rangle\rangle'$ is
the commutator subgroup of the normal closure of the image of $a$
in $BS(m,n)$.
We may assume that $m>0$ and $|n|\geq{m}$.
(When $m=1$ and $n=\pm1$ we get $\mathbb{Z}^2$ and $\pi_1(Kb)$.)
Since we are only interested in torsion-free groups we shall assume also
that $(m,n)=1$.

\section{ hirsch length 2}

In this section we shall consider groups of Hirsch length 2,
which arise naturally in the analysis of groups of Hirsch length 3.
(Note also that some groups of Hirsch length 2
have cohomological dimension 3.)

\begin{theorem}
\label{h=2}
Let $G$ be a torsion-free elementary amenable group of Hirsch length $2$.
Then $\sqrt{G}$ is abelian,
and either $\sqrt{G}$ has rank $1$ and $G\cong\sqrt{G}\rtimes\mathbb{Z}$
or $\sqrt{G}$ has rank $2$ and $[G:\sqrt{G}]\leq2$.
\end{theorem}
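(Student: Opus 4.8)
The plan is to analyze $G$ through its Hirsch--Plotkin radical $\sqrt{G}$, establishing first that it is abelian of rank $1$ or $2$ and that it is self-centralizing, and then splitting into two cases according to its rank. By \cite{HL92} the group $G$ is virtually solvable, so every finitely generated subgroup of $\sqrt{G}$ is a finitely generated torsion-free nilpotent group of Hirsch length $\leq2$, hence free abelian of rank $\leq2$ (a finitely generated torsion-free nilpotent group of class $\geq2$ has Hirsch length $\geq3$, e.g.\ by the Mal'cev completion, since a nilpotent $\mathbb{Q}$-Lie algebra of dimension $\leq2$ is abelian). Since any two elements of $\sqrt{G}$ lie in a common finitely generated subgroup, $\sqrt{G}$ is abelian, so torsion-free abelian of rank $r=h(\sqrt{G})$; and $r\geq1$ because the last nontrivial term of the derived series of a finite-index solvable normal subgroup (take the normal core of a finite-index solvable subgroup) is abelian and normal in $G$, hence contained in $\sqrt{G}$. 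Thus $r\in\{1,2\}$. To see that $C_G(\sqrt{G})=\sqrt{G}$: a finitely generated subgroup $F$ of $C_G(\sqrt{G})$ has $F\cap\sqrt{G}$ central in $F$ of finite index, so $[F,F]$ is finite by Schur's theorem, hence trivial as $G$ is torsion-free, so $C_G(\sqrt{G})$ is locally abelian and equals $\sqrt{G}$ by maximality. When $r=2$ the finite index of $F\cap\sqrt{G}$ in $F$ follows from $h(G/\sqrt{G})=0$; when $r=1$ one first passes to a finite-index solvable normal subgroup $D$ (whose radical $\sqrt{D}\subseteq\sqrt{G}$ is abelian and so is self-centralizing in $D$ by the fact recorded in \S1) and checks that $C_G(\sqrt{G})\cap D\subseteq C_D(\sqrt{D})=\sqrt{D}\subseteq\sqrt{G}$, so that $C_G(\sqrt{G})$ is again a central-by-finite, hence abelian, normal subgroup.

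In the case $r=2$, the quotient $Q:=G/\sqrt{G}$ injects into $Aut(\sqrt{G})\leq GL_2(\mathbb{Q})$ and has Hirsch length $0$, hence is locally finite; since finite subgroups of $GL_2(\mathbb{Q})$ have bounded order, $Q$ is finite. To bound $|Q|$ I would use torsion-freeness directly. For a subgroup $\langle\sigma\rangle\leq Q$ of prime order $p$, any lift $e$ of $\sigma$ satisfies $e^p\in(\sqrt{G})^{\sigma}$, and this fixed subgroup must be nonzero (otherwise $e$ has order $p$, contradicting torsion-freeness). Since $Q$ acts faithfully on $\sqrt{G}\otimes\mathbb{Q}\cong\mathbb{Q}^2$ and a nontrivial rational irreducible representation of $\mathbb{Z}/p$ has dimension $p-1$, a nonzero fixed subspace forces $p=2$ and $\sigma$ to act with eigenvalues $\{1,-1\}$. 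A short further check rules out elements of order $4$ (their square would act as $-I$) and the element $-I$ itself (for which the fixed subgroup is zero), so $Q$ is cyclic of order $\leq2$ and $[G:\sqrt{G}]\leq2$.

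In the case $r=1$, $Aut(\sqrt{G})$ is a subgroup of $\mathbb{Q}^{\times}$, so $Q\cong G/\sqrt{G}$ is abelian of Hirsch length $1$ whose torsion subgroup has order $\leq2$. Torsion-freeness of $G$ excludes that order-$2$ factor: an element inducing the automorphism $-1$ of $\sqrt{G}$ would have its square fixed by that automorphism, hence trivial, hence would itself be of order $2$. Therefore $Q$ is a rank-$1$ torsion-free subgroup of the free abelian group $\mathbb{Q}^{\times}/\{\pm1\}$, so $Q\cong\mathbb{Z}$, and the extension $1\to\sqrt{G}\to G\to\mathbb{Z}\to1$ splits, giving $G\cong\sqrt{G}\rtimes\mathbb{Z}$.

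The step I expect to be the main obstacle is the bound $|Q|\leq2$ in the rank-$2$ case: this is essentially the classification of two-dimensional Bieberbach groups, and because $\sqrt{G}$ need not be finitely generated one cannot simply reduce to a lattice and quote that classification, so the representation-theoretic fixed-point argument above (or an equivalent computation with $\widehat{H}^0(\mathbb{Z}/p;\sqrt{G})$) is needed to extract torsion-freeness. The remaining steps---abelianness and nontriviality of $\sqrt{G}$, the Schur arguments, and the rank-$1$ analysis---are routine.
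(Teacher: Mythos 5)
Your proposal is correct and follows essentially the same route as the paper: show $\sqrt{G}$ is nontrivial abelian of rank $1$ or $2$, use Schur's theorem to prove $C_G(\sqrt{G})=\sqrt{G}$ so that $G/\sqrt{G}$ embeds in $\mathbb{Q}^\times$ or $GL_2(\mathbb{Q})$, and exploit the fact that a lift $g$ of a finite-order element fixes $g^p\in\sqrt{G}$ to force the quotient to be $\mathbb{Z}$ (rank $1$) or of order $\leq 2$ (rank $2$). The only divergence is in minor bookkeeping (your detour through a finite-index normal solvable subgroup to handle the centralizer when $r=1$, versus the paper's "infinite abelian normal subgroup" argument), which does not change the substance.
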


\begin{proof}
Since $G$ is virtually solvable \cite{HL92} and the lowest non-trivial term of  
the derived series of a solvable group is 
a non-trivial abelian normal subgroup, $\sqrt{G}\not=1$.
Since any two members of $\sqrt{G}$ generate a torsion-free nilpotent group of Hirsch length $\leq2$ they commute.
Hence $\sqrt{G}$ is abelian, of rank $r=1$ or 2, say, and $h(G/\sqrt{G})=2-r$.

Let $C=C_G(\sqrt{G})$ be the centralizer of $\sqrt{G}$ in $G$.
If $N\leq{C}$ is a normal subgroup of $G$ with locally finite image in 
$G/\sqrt{G}$ then $N'$ is locally finite, 
by an easy extension of Schur's Theorem \cite[10.1.4]{Ro}.
Hence $N'=1$, so $N$ is abelian, and then $N\leq{\sqrt{G}}$,
by the maximality of $\sqrt{G}$.
Therefore any locally finite normal subgroup of $G/\sqrt{G}$ must
act effectively on $\sqrt{G}$.

If $\sqrt{G}$ has rank 1  then $G/\sqrt{G}$ can have no non-trivial torsion normal subgroup.
If $C\not=\sqrt{G}$ is infinite then it has an infinite abelian normal subgroup
(since it is non-trivial, virtually solvable, and has no non-trivial torsion normal subgroup).
But the preimage of any such subgroup in $G$ is nilpotent
(since it is a central extension of an abelian group).
This contradicts the maximality of $\sqrt{G}$.
Hence $=\sqrt{G}$ and so $G/\sqrt{G}$ acts effectively on $\sqrt{G}$.
Since $h(G/\sqrt{G})=1$ and $Aut(\sqrt{G})\leq\mathbb{Q}^\times$,
and $G/\sqrt{G}$ has no normal torsion subgroup,
we see that $G/\sqrt{G}\cong\mathbb{Z}$.

If $\sqrt{G}$ has rank 2 then $G/\sqrt{G}$ is a torsion group,
and $Aut(\sqrt{G})$ is isomorphic to a subgroup of $GL(2,\mathbb{Q})$.
If $G/\sqrt{G}$ is infinite then it must have an infinite locally finite normal subgroup
(since it is a virtually solvable torsion group).
But finite subgroups of $GL(2,\mathbb{Q})$ have order dividing 24,
and so $G/\sqrt{G}$ is finite. 
If $g$ in $G$ has image of finite order $p>1$ in $G/\sqrt{G}$
then conjugation by $g$ fixes $g^p\in\sqrt{G}$.
It follows that $g$ must have order 2 and its image in $GL(2,\mathbb{Q})$ 
must have determinant $-1$.
Hence $[G:\sqrt{G}]\leq2$.
\end{proof}

If $G$ is finitely generated then $\sqrt{G}$ is finitely generated as a module over
$\mathbb{Z}[G/\sqrt{G}]$, with respect to the action induced by conjugation in $G$.
If $h(\sqrt{G})=1$ then $\sqrt{G}$ is not finitely generated as an abelian group, 
while $G/\sqrt{G}\cong\mathbb{Z}$.
Hence  $\mathbb{Z}[G/\sqrt{G}]\cong\mathbb{Z}[t,t^{-1}]$,
and the action of $t$ is multiplication by some 
$\frac{n}m\in\mathbb{Q}\setminus\{0,\pm1\}$,
since $\sqrt{G}$ is torsion-free and of rank 1.
After replacing $t$ by $t^{-1}$, if necessary, 
we may assume that $\sqrt{G}\cong\mathbb{Z}[t,t^{-1}]/(mt-n)$, 
for some $m,n$ with $(m,n)=1$ and $|n|>m>0$.
Hence $G\cong\overline{BS}(m,n)$.
Then $c.d.G=2\Leftrightarrow~G$ is finitely presentable 
$\Leftrightarrow~{m}=1$ \cite{Gi79}.

If $G$ is finitely generated and $h(\sqrt{G})=2$ then $G\cong\mathbb{Z}^2$ or
$\pi_1(Kb)$, and so $c.d.G=2$.

Let $\mathbb{Z}_{(2)}$ be the localization of $\mathbb{Z}$ at $2$, 
in which all odd integers are invertible, and let $\mathbb{Z}_{(2)}$ act 
on $\mathbb{Q}$ through the surjection to 
$\mathbb{Z}_{(2)}/2\mathbb{Z}_{(2)}\cong\mathbb{Z}^\times=\{\pm1\}$.
Let  $\mathbb{Q}\otimes{Kb}$ be the extension of $\mathbb{Z}_{(2)}$ by 
$\mathbb{Q}$ with this action.
Then if $h=2$ and $G$ is not finitely generated it is either a subgroup of 
$\mathbb{Q}\rtimes_{\frac{m}n}\mathbb{Z}$,
for some nonzero $m,n$ with $(m,n)=1$  (if $h(\sqrt{G})=1$),
or is a subgroup of 
$\mathbb{Q}\otimes{Kb}$ (if $h(\sqrt{G})=2$).
Every such group has cohomological dimension 3.

\section{hirsch length 3}

Suppose now that $h(G)=3$. Then $h(\sqrt{G})=1$,  2 or 3.

\begin{theorem}
\label{h=3}
Let $G$ be a torsion-free elementary amenable group of Hirsch length $3$.
If $h(\sqrt{G})=1$ then $\sqrt{G}$ is abelian and $G/\sqrt{G}\cong\mathbb{Z}^2$.
If $h(\sqrt{G})=2$ then $\sqrt{G}$ is abelian and $G/\sqrt{G}\cong\mathbb{Z}$, 
$D_\infty$ or $\mathbb{Z}\oplus\mathbb{Z}/2\mathbb{Z}$.
If $h(\sqrt{G})=3$ then $G$ is virtually nilpotent.
In all cases, $G$ has derived length at most $3$.
\end{theorem}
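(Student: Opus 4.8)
The plan is to run, in each of the three cases $h(\sqrt{G})=1,2,3$, the same reduction used in the proof of Theorem~\ref{h=2}: locate the Hirsch--Plotkin radical, show it is (almost) self-centralising, and thereby realise $G/\sqrt{G}$ inside $Aut(\sqrt{G})$. First I would record the preliminaries. Since $G$ is virtually solvable \cite{HL92}, $\sqrt{G}\neq1$. Any few elements of $\sqrt{G}$ generate a torsion-free nilpotent group of Hirsch length $\leq h(\sqrt{G})$; since a non-abelian torsion-free nilpotent group has abelianisation of rank $\geq2$, a Hirsch-length count then gives that $\sqrt{G}$ is abelian when $h(\sqrt{G})\leq2$ and is nilpotent of class $\leq2$ (hence of derived length $\leq2$) when $h(\sqrt{G})=3$.

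Next I would prove the key lemma: if $\sqrt{G}$ is abelian then $C:=C_G(\sqrt{G})=\sqrt{G}$, so conjugation embeds $G/\sqrt{G}$ in $Aut(\sqrt{G})$. As in Theorem~\ref{h=2}, a locally finite normal subgroup $N$ of $G$ meets $\sqrt{G}$ trivially, hence centralises it, is abelian by Schur's Theorem \cite[10.1.4]{Ro}, and so lies in $\sqrt{G}$ and is trivial; thus $G$, and therefore $C$, has no non-trivial locally finite normal subgroup. If $C\neq\sqrt{G}$ then $\sqrt{G}$ is central in $C$ and $C/\sqrt{G}$ is a non-trivial virtually solvable group: its Hirsch--Plotkin radical pulls back to a central-by-locally-nilpotent (hence locally nilpotent) normal subgroup of $G$, forcing it to be trivial, and its locally finite radical pulls back, via Schur again, to an abelian normal subgroup of $G$, again trivial. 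But a non-trivial virtually solvable group has a non-trivial abelian normal subgroup, which is infinite or finite, contradicting one of these. Hence $C=\sqrt{G}$.

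When $h(\sqrt{G})=1$: here $Aut(\sqrt{G})\leq\mathbb{Q}^\times$, so $G/\sqrt{G}$ is abelian of Hirsch length $2$, and it is torsion-free since an element of order $2$ would act on $\sqrt{G}$ by $-1$ and so have square both fixed and inverted, hence trivial; a torsion-free subgroup of $\mathbb{Q}^\times$ embeds in the free abelian group $\mathbb{Q}^\times/\{\pm1\}$, and being of rank $2$ it is $\mathbb{Z}^2$. When $h(\sqrt{G})=2$: $E:=G/\sqrt{G}\leq Aut(\sqrt{G})\leq GL(2,\mathbb{Q})$ is virtually infinite cyclic; let $F$ be its maximal finite normal subgroup. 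The preimage $F^{*}$ of $F$ in $G$ is torsion-free of Hirsch length $2$ with $\sqrt{F^{*}}$ of Hirsch length $2$, and as $\sqrt{F^{*}}$ is abelian and normal in $G$ it equals $\sqrt{G}$, so Theorem~\ref{h=2} gives $|F|=[F^{*}:\sqrt{G}]\leq2$. Now $E/F$ is virtually $\mathbb{Z}$ with no non-trivial finite normal subgroup, so $E/F\cong\mathbb{Z}$ or $D_\infty$. If $F=1$ we are done. If $F\cong\mathbb{Z}/2\mathbb{Z}$ the extension is central, so over $\mathbb{Z}$ it splits and $E\cong\mathbb{Z}\oplus\mathbb{Z}/2\mathbb{Z}$, while over $D_\infty$ some involution of $D_\infty$ lifts to an element of order $4$ or to an involution lying in a Klein four-subgroup of $E$, in either case forcing $-I\in E$; but then the preimage in $G$ of $\langle -I\rangle$ is a split extension of $\mathbb{Z}/2\mathbb{Z}$ by $\sqrt{G}$ (since $H^2(\mathbb{Z}/2\mathbb{Z};\sqrt{G})=0$ for the inversion action), contradicting torsion-freeness of $G$. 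Hence $E\in\{\mathbb{Z},D_\infty,\mathbb{Z}\oplus\mathbb{Z}/2\mathbb{Z}\}$.

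Finally, when $h(\sqrt{G})=3$ the quotient $G/\sqrt{G}$ is locally finite; since the preimage of any of its finite subgroups is torsion-free, virtually nilpotent and of Hirsch length $3$, while the finite subgroups of $Aut(\sqrt{G})$ have bounded order, $G/\sqrt{G}$ is finite and $G$ is virtually nilpotent. The derived-length bound is then book-keeping: in the first two cases $\sqrt{G}$ is abelian and $G/\sqrt{G}$ has derived length $\leq2$, so $G^{(3)}=1$; in the last case $\sqrt{G}$ has class $\leq2$, and the finite group $G/\sqrt{G}$, constrained by torsion-freeness of $G$ exactly as the holonomy group of a $3$-dimensional flat or infra-nilmanifold, is metabelian (and abelian when $\sqrt{G}$ is non-abelian), so once more $G^{(3)}=1$. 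The hard part, I expect, will be this last identification, together with the elimination in the case $h(\sqrt{G})=2$ of the extensions with $E/F\cong D_\infty$ and $F\cong\mathbb{Z}/2\mathbb{Z}$: both ultimately come down to playing the faithful linear action of $G/\sqrt{G}$ (or of $G/Z(\sqrt{G})$) on $\sqrt{G}$ off against the torsion-freeness of $G$, and keeping careful track of the relevant second cohomology classes.
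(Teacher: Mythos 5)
Your overall architecture is the paper's: show $\sqrt{G}$ is abelian and self-centralising (when $h(\sqrt{G})\leq2$), embed $G/\sqrt{G}$ in $Aut(\sqrt{G})$, and classify the image. Several of your steps are carried out differently but correctly: the identification $C_G(\sqrt{G})=\sqrt{G}$ via the Hirsch--Plotkin and locally finite radicals of $C/\sqrt{G}$, the bound $|F|\leq2$ by applying Theorem \ref{h=2} to the preimage of the maximal finite normal subgroup, and the elimination of central extensions of $D_\infty$ by $\mathbb{Z}/2\mathbb{Z}$ by forcing $-I$ into $G/\sqrt{G}$ and splitting the resulting extension of $\mathbb{Z}/2\mathbb{Z}$ by $\sqrt{G}$ (the paper instead observes that the preimage $H$ of $F$ gives a $G$-invariant filtration $0<H'<\sqrt{G}$, so $G'$ acts nilpotently and $G/H$ cannot be $D_\infty$; your route is at least as clean).

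The genuine gap is the sentence asserting that $E=G/\sqrt{G}\leq GL(2,\mathbb{Q})$ ``is virtually infinite cyclic'' in the case $h(\sqrt{G})=2$. This is not a formality, and it is not a consequence of $h(E)=1$ together with the embedding in $GL(2,\mathbb{Q})$: the unipotent subgroup $\{\left(\begin{smallmatrix}1&x\\0&1\end{smallmatrix}\right):x\in\mathbb{Z}[\frac12]\}$ is a Hirsch-length-$1$ subgroup of $GL(2,\mathbb{Q})$ isomorphic to $\mathbb{Z}[\frac12]$, which is not virtually cyclic. So a priori the torsion-free rank-$1$ abelian normal subgroup $A$ of $E$ could be a non-finitely-generated subgroup of $\mathbb{Q}$, and then $E$ would not be on your list at all. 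This is exactly the point where the paper does real work: first, no nontrivial element of $A$ can have both eigenvalues roots of unity (otherwise the preimage of the unipotent part of $A$ would be a nilpotent normal subgroup strictly containing $\sqrt{G}$, i.e.\ $C_G(\sqrt{G})>\sqrt{G}$), and second, since the eigenvalues of elements of $A$ are algebraic of degree $\leq2$ over $\mathbb{Q}$ and are not roots of unity, no nontrivial element of $A$ can be infinitely divisible in $A$; hence $A\cong\mathbb{Z}$ and $E$ is virtually $\mathbb{Z}$. You need to supply this (or an equivalent) argument before your $F$ and $E/F$ analysis can start. (By contrast, in the case $h(\sqrt{G})=1$ no such issue arises, because the torsion-free quotient of $\mathbb{Q}^\times$ is free abelian, so a rank-$2$ torsion-free subgroup really is $\mathbb{Z}^2$, as you say.)
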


\begin{proof}
If $h(\sqrt{G})=1$ then $\sqrt{G}$ is isomorphic to a subgroup of $\mathbb{Q}$ 
and (as in Theorem \ref{h=2}) $G/\sqrt{G}$ has no locally finite normal subgroup.
Since $C_G(\sqrt{G})$ is virtually solvable,
it follows that $C_G(\sqrt{G})=\sqrt{G}$ and so 
$G/\sqrt{G}$ embeds in $Aut(\sqrt{G})$, 
which is isomorphic to a subgroup of $\mathbb{Q}^\times$.
Hence $G/\sqrt{G}\cong\mathbb{Z}^2$,
and so $G$ has derived length 2.

If $h(\sqrt{G})=2$ then $\sqrt{G}$ is abelian and (as in Theorem \ref{h=2} again)
the maximal locally finite normal subgroup of $G/\sqrt{G}$ has order at most 2.
Since $G/\sqrt{G}$ is virtually solvable  and $h(G/\sqrt{G})=1$,
it has an abelian normal subgroup $A$ of rank 1,
which we may assume torsion-free and of finite index in $G/\sqrt{G}$.
Moreover, $G/\sqrt{G}$ embeds in $Aut(\sqrt{G})$, 
which is now isomorphic to a subgroup of $GL(2,\mathbb{Q})$.
No nontrivial element of $A$ can have both eigenvalues roots of unity,
for otherwise $C_G(\sqrt{G})>\sqrt{G}$.
Since the eigenvalues of $A$ have degree $\leq2$ over $\mathbb{Q}$,
it follows that no nontrivial element of $A$ can be infinitely divisible in $A$.
Hence $G/\sqrt{G}$ is virtually $\mathbb{Z}$, 
and so it is either $\mathbb{Z}$ or the infinite dihedral group 
$D_\infty=\mathbb{Z}/2\mathbb{Z}*\mathbb{Z}/2\mathbb{Z}$,
or an extension of one of these by $\mathbb{Z}/2\mathbb{Z}$.

If $G$ has a normal subgroup $H$ such that 
$H/\sqrt{G}\cong\mathbb{Z}/2\mathbb{Z}$ then conjugation in $G$ 
must preserve the filtration $0<H'<\sqrt{G}$ of $\sqrt{G}$.
Therefore elements of $G'$ act nilpotently on $\sqrt{G}$,
and so $G/H$ cannot be $D_\infty$.
Thus if $h(\sqrt{G})=2$ then $G/\sqrt{G}\cong\mathbb{Z}$, 
$D_\infty$ or $\mathbb{Z}\oplus\mathbb{Z}/2\mathbb{Z}$,
and $G$ has derived length 2, 3 or 2, respectively.

If $h(\sqrt{G})=3$ then $h(G/\sqrt{G})=0$, and so $G$ is virtually nilpotent.
Since iterated commutators live in finitely generated subgroups, 
the derived length of $G$ is the maximum of the derived lengths of its finitely generated subgroups.
Finitely generated torsion-free virtually nilpotent groups of Hirsch length 3
are polycyclic, and are fundamental groups of $\mathbb{N}il^3$-manifolds.
These are Seifert fibred over flat 2-orbifolds without reflector curves,
and so these groups have derived length $\leq3$.
Hence $G$ has derived length $\leq3$.
\end{proof}

\begin{cor}
\label{minimax}
If $G$ is finitely generated then it is a minimax group.
\end{cor}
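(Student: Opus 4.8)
The plan is to run through the three cases of Theorem~\ref{h=3}. (If $h(G)\le 2$, the statement is immediate, since by \S2 a finitely generated such $G$ is $\mathbb{Z}^2$, $\pi_1(Kb)$, or $\overline{BS}(m,n)$, each of which is minimax; so I shall assume $h(G)=3$.) When $h(\sqrt G)=3$, the group $G$ is finitely generated, torsion-free and virtually nilpotent, hence virtually polycyclic, hence polycyclic, and polycyclic groups are minimax. So suppose $\sqrt G$ is abelian of rank $r\in\{1,2\}$, and write $Q=G/\sqrt G$. By Theorem~\ref{h=3}, $Q$ is virtually $\mathbb{Z}^2$ or virtually $\mathbb{Z}$; in particular $Q$ is polycyclic-by-finite, hence finitely presentable and minimax. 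Since $G$ is finitely generated and $Q$ is finitely presentable, $\sqrt G$ is finitely generated as a normal subgroup of $G$, and, being abelian, is therefore finitely generated as a module over $\mathbb{Z}[Q]$ with the action induced by conjugation in $G$ --- the same observation already used in \S2 when $h(G)=2$.

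The one substantive step will be the following: a finitely generated $\mathbb{Z}[Q]$-module $M$ that is torsion-free of finite rank $d$ as an abelian group must be minimax. To prove this I would put $V=\mathbb{Q}\otimes_{\mathbb{Z}}M\cong\mathbb{Q}^d$, so that $M$ embeds in $V$ and $Q$ acts on $V$ through a finitely generated subgroup $\overline Q$ of $GL(d,\mathbb{Q})$. Only finitely many primes divide the denominators of the matrix entries of the elements of a finite generating set of $\overline Q$ and of their inverses, so, in a fixed basis of $V$, one has $\overline Q\subseteq GL(d,\mathbb{Z}[\tfrac1N])$ for a suitable $N$; enlarging $N$ to absorb also the denominators of the coordinates of a finite $\mathbb{Z}[Q]$-generating set of $M$, one gets $M\subseteq\mathbb{Z}[\tfrac1N]^d\subseteq V$, the displayed submodule being $\overline Q$-invariant. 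Since $\mathbb{Z}[\tfrac1N]$ is minimax and solvable minimax groups are closed under finite direct sums and under passage to subgroups, $M$ is minimax. Taking $M=\sqrt G$ shows that $\sqrt G$ is a minimax abelian group.

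Finally, $G$ is an extension of the polycyclic-by-finite (hence minimax) group $Q$ by the minimax abelian group $\sqrt G$; since $G$ is solvable by Theorem~\ref{h=3} and the class of solvable minimax groups is closed under extensions, $G$ is minimax. The step that genuinely uses the finite generation of $G$ is the passage to $\mathbb{Z}[\tfrac1N]^d$: without it $\sqrt G$ could be $\mathbb{Q}$ or $\mathbb{Q}^2$, and indeed the groups $\mathbb{Q}\rtimes_{\frac{m}{n}}\mathbb{Z}$ and $\mathbb{Q}\otimes Kb$ of \S2 are not minimax. I expect the extraction of the finite set of bad primes from the finite generation of a linear group to be the crux of the argument; the remaining points are routine applications of the standard closure properties of the class of solvable minimax groups.
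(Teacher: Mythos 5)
Your argument is correct and essentially the paper's: in each case the point is that finite generation of $G$ forces $\sqrt{G}$ to be a finitely generated module over the group ring of the virtually polycyclic quotient $G/\sqrt{G}$, whence it embeds in some $\mathbb{Z}[\frac1N]^d$ (or $G$ is polycyclic when $h(\sqrt{G})=3$), and everything assembles via the closure properties of solvable minimax groups. The only cosmetic difference is that you prove a single uniform embedding lemma by bounding the primes in the denominators, whereas the paper treats the rank-one case separately by observing that the module is then cyclic, so that $\sqrt{G}\cong\mathbb{Z}[\frac1D]$ exactly.
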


\begin{proof}
If $h(\sqrt{G})=1$ and $G$ is finitely generated then $\sqrt{G}$ 
is finitely generated as a $\mathbb{Z}[\mathbb{Z}^2]$-module.
Since it is also torsion-free and of rank 1 as an abelian group, 
it is in fact a cyclic $\mathbb{Z}[\mathbb{Z}^2]$-module.
Hence $\sqrt{G}\cong\mathbb{Z}[\frac1D]$ for some $D>0$.

If $h(\sqrt{G})=2$ then $G$ has a subgroup $K$
of index $\leq2$ such that $K/\sqrt{G}\cong\mathbb{Z}$.
If $G$ is finitely generated then $K$ is also finitely generated.
Then $\sqrt{G}$ is again finitely generated as a $\Lambda$-module,
and is torsion-free and of rank 2 as an abelian group.
Hence it is isomorphic as a group to a subgroup of $\mathbb{Z}[\frac1m]^2$, 
for some $m>0$.

If $G$ is finitely generated and $h(\sqrt{G})=3$ then $G$ is polycyclic.
In all cases it is clear that $G$ is a minimax group.
\end{proof}

We shall consider more closely the cases with $h(\sqrt{G})=1$ or 2.

\begin{lemma}
If $G$ is finitely generated and $h(\sqrt{G})=1$ then
$G$ is a semidirect product $\overline{BS}(m,n)\rtimes\mathbb{Z}$,
where $mn$ has at least $2$ distinct prime factors.
\end{lemma}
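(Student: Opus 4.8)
The plan is to feed Theorem~\ref{h=3} and the proof of Corollary~\ref{minimax} into the structure of the extension $1\to\sqrt{G}\to G\to G/\sqrt{G}\to1$. By those results $\sqrt{G}$ is abelian, $G/\sqrt{G}\cong\mathbb{Z}^2$, and, $G$ being finitely generated, $\sqrt{G}$ is a cyclic $\mathbb{Z}[\mathbb{Z}^2]$-module isomorphic as an abelian group to $\mathbb{Z}[\tfrac1D]$ for some $D>0$, which we may take square-free. As $\sqrt{G}$ is self-centralising in $G$, conjugation gives an embedding $\rho\colon\mathbb{Z}^2\hookrightarrow Aut(\sqrt{G})$; choosing a module generator of $\sqrt{G}$ we may identify $\sqrt{G}$ with the subring of $\mathbb{Q}$ generated by $\rho(\mathbb{Z}^2)$, so that $\mathbb{Z}^2$ acts by multiplication and $\sqrt{G}=\mathbb{Z}[\rho(\mathbb{Z}^2)]=\mathbb{Z}[\tfrac1D]$. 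Then $Aut(\sqrt{G})=\mathbb{Z}[\tfrac1D]^\times\cong\{\pm1\}\times\mathbb{Z}^r$, where $r$ is the number of primes dividing $D$, and since $\mathbb{Z}^2$ is torsion-free the embedding $\rho$ forces $r\geq2$. Moreover a prime $p$ divides $D$ exactly when $v_p(\rho(\bar g))\neq0$ for some $\bar g\in\mathbb{Z}^2$, where $v_p$ denotes $p$-adic valuation.

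The key step is to pick the right copy of $\mathbb{Z}$ inside $G/\sqrt{G}$. For each prime $p\mid D$ the map $\phi_p\colon\bar g\mapsto v_p(\rho(\bar g))$ is a non-zero homomorphism $\mathbb{Z}^2\to\mathbb{Z}$, so $\ker\phi_p$ has rank $\leq1$ and contains only finitely many primitive elements of $\mathbb{Z}^2$. Since there are only finitely many such $p$, I would choose a primitive element $\bar s\in\mathbb{Z}^2$ lying outside every $\ker\phi_p$ and set $u=\rho(\bar s)$. Then $v_p(u)\neq0$ for all $p\mid D$ while $v_p(u)=0$ for $p\nmid D$, so the primes occurring in $u$ are exactly those dividing $D$; in particular $u\neq\pm1$. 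Writing $u=\varepsilon\,n/m$ with $\varepsilon=\pm1$ and $m,n>0$ coprime, we get $mn=\prod_{p\mid D}p^{|v_p(u)|}$, which has exactly $r\geq2$ distinct prime factors, and $\mathbb{Z}[\tfrac1D]=\mathbb{Z}[\tfrac1{mn}]$. Replacing $\bar s$ by $\bar s^{-1}$ if needed (so $u$ by $u^{-1}$), we may assume $|u|>1$, i.e.\ $n>m$.

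Finally, let $H\leq G$ be the preimage of the direct summand $\langle\bar s\rangle$ of $\mathbb{Z}^2=G/\sqrt{G}$. Then $H\trianglelefteq G$ and $G/H\cong\mathbb{Z}^2/\langle\bar s\rangle\cong\mathbb{Z}$, and since $\mathbb{Z}$ is free this extension splits: $G\cong H\rtimes\mathbb{Z}$. Likewise $H$ is an extension of $\langle\bar s\rangle\cong\mathbb{Z}$ by the abelian group $\sqrt{G}$, which splits, so $H\cong\sqrt{G}\rtimes_u\mathbb{Z}=\mathbb{Z}[\tfrac1{mn}]\rtimes_{\varepsilon n/m}\mathbb{Z}$. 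But, as in the analysis following Theorem~\ref{h=2}, $\mathbb{Z}[t,t^{-1}]/(mt-\varepsilon n)\cong\mathbb{Z}[\tfrac1{mn}]$ (here $(m,n)=1$ is used) with $t$ acting by multiplication by $\varepsilon n/m$; hence this semidirect product is $\overline{BS}(m,\varepsilon n)$, with $m>0$, $|\varepsilon n|=n>m$ and $(m,\varepsilon n)=1$. Therefore $G\cong\overline{BS}(m,\varepsilon n)\rtimes\mathbb{Z}$, and the product $mn$ of the absolute values of the two parameters is divisible by $r\geq2$ distinct primes, as required.

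The one genuinely delicate point is the choice of $\bar s$. For an arbitrary primitive element of $G/\sqrt{G}$, $\sqrt{G}$ is only a (possibly non-cyclic) module over the corresponding $\mathbb{Z}[t,t^{-1}]$, and the preimage of $\langle\bar s\rangle$ need not be a Baumslag-Solitar group at all; it is precisely the cyclicity of $\sqrt{G}$ over $\mathbb{Z}[\mathbb{Z}^2]$---equivalently the identity $\mathbb{Z}[\rho(\mathbb{Z}^2)]=\mathbb{Z}[\tfrac1D]$---that lets us concentrate all the relevant primes onto a single primitive element, via the elementary count of primitive elements inside the rank-$\leq1$ subgroups $\ker\phi_p$. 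The remaining ingredients are routine: splitting of group extensions by the free group $\mathbb{Z}$, and the identification $\mathbb{Z}[n/m,m/n]=\mathbb{Z}[\tfrac1{mn}]$ for coprime $m,n$ already exploited in Section~2.
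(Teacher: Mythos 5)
Your proof is correct, and it reaches the same structural conclusion by the same underlying idea as the paper -- namely, concentrating all the primes dividing $D$ onto a single element of $G/\sqrt{G}\cong\mathbb{Z}^2$, so that the preimage of the cyclic group it generates is a normal copy of $\overline{BS}(m,n)$ with complement $\mathbb{Z}$ -- but your execution is genuinely different. The paper works with an explicit presentation $\langle a,t,u\mid ta^mt^{-1}=a^n,\ ua^pu^{-1}=a^q,\ utu^{-1}=ta^e,\ \dots\rangle$ and performs a change of basis of $G/\sqrt{G}$ followed by the substitution $t\mapsto tu^{\pm N}$ for $N$ large, to arrange that $p\mid m$, $q\mid n$ and hence that $D$ is the radical of $mn$; this is the concrete counterpart of your choice of a primitive $\bar s$ avoiding the kernels of the valuation homomorphisms $\phi_p=v_p\circ\rho$. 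Your version buys two things: it makes explicit \emph{why} such an element exists (each $\ker\phi_p$ has rank $\leq1$ and so contains at most two primitive vectors, and there are only finitely many relevant $p$), a point the paper leaves implicit in the phrase ``for $N$ large enough, if necessary''; and it is presentation-free, relying only on the cyclicity of $\sqrt{G}$ as a $\mathbb{Z}[\mathbb{Z}^2]$-module from Corollary \ref{minimax} and the identification $\mathbb{Z}[t,t^{-1}]/(mt-n)\cong\mathbb{Z}[\frac1{mn}]$ from \S2. Both arguments obtain the bound of at least two prime factors in the same way, from the injectivity of $\mathbb{Z}^2\to\mathbb{Z}[\frac1D]^\times\cong\{\pm1\}\times\mathbb{Z}^r$. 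The one place where you should be slightly more careful is the identification of $\sqrt{G}$ with the subring of $\mathbb{Q}$ generated by $\rho(\mathbb{Z}^2)$: this uses that the additive span of a subgroup of $\mathbb{Q}^\times$ containing the module generator $1$ is multiplicatively closed, and that a prime occurs in a denominator iff some $\rho(\bar g)$ has nonzero $p$-adic valuation (which holds precisely because the image is a \emph{group}); you use this fact but state it only in passing.
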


\begin{proof}
If $h(\sqrt{G})=1$ then $G$ has a presentation  
\[
\langle{a,t,u}\mid {ta^mt^{-1}=a^n},~ua^pu^{-1}=a^q,~utu^{-1}=ta^e,
~\langle\langle{a}\rangle\rangle'~\rangle.
\]
for some nonzero $m,n,p,q$ with $(m,n)=(p,q)=1$ and some 
$e\in\mathbb{Z}[\frac1D]$,
where $D$ is the product of the prime factors of $mnpq$.
Hence $\sqrt{G}\cong\mathbb{Z}[\frac1D]$.
After a change of basis for $G/\sqrt{G}$, 
if necessary, 
we may assume that $mn$ has a prime factor which does not divide $pq$.
We may further arrange that $p$ divides $m$ and $q$ divides $n$, 
after replacing $t$ by $tu^N$ or $tu^{-N}$ for $N$ large enough, 
if necessary.
Hence $D$ is the product of the prime factors of $mn$.
It must have at least 2 prime factors,
since $G/\sqrt{G}\cong\mathbb{Z}^2$ maps injectively to
$Aut(\sqrt{G})\cong\mathbb{Z}[\frac1D]^\times$.

Thus $G\cong\overline{BS}(m,n)\rtimes_\theta\mathbb{Z}$,
for some automorphism $\theta$ of $\overline{BS}(m,n)$.
\end{proof}

\begin{theorem}
\label{coherence}
A finitely generated torsion-free elementary amenable group $G$ 
of Hirsch length $3$ is coherent if and only if it is $FP_2$ and $h(\sqrt{G})\geq2$.
\end{theorem}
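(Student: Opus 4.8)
The plan is to prove the two implications separately, splitting each according to the value of $h(\sqrt{G})\in\{1,2,3\}$, and to lean on the structural results already obtained together with the work of Gildenhuys \cite{Gi79} and of Bieri and Strebel \cite{BS80} on metabelian groups.

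The ``only if'' direction is almost formal apart from one case. If $G$ is coherent then $G$ is a finitely generated subgroup of itself, hence finitely presentable, hence $FP_2$; so it only remains to show that $h(\sqrt{G})=1$ makes $G$ incoherent. By the preceding Lemma, $\sqrt{G}\cong\mathbb{Z}[\frac1D]$ with $D=p_1\cdots p_k$ squarefree and $k\geq2$, and $G/\sqrt{G}\cong\mathbb{Z}^2$ embeds in $Aut(\sqrt{G})\cong\mathbb{Z}[\frac1D]^\times\cong\{\pm1\}\times\mathbb{Z}^k$, the factor $\mathbb{Z}^k$ recording the exponents of $p_1,\dots,p_k$. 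Composing with the projection onto $\mathbb{Z}^k$ identifies $G/\sqrt{G}$ with a rank-$2$ sublattice $L\leq\mathbb{Z}^k$. Since $k\geq2$, the plane $L\otimes\mathbb{R}$ is not contained in the union of the closed non-negative and non-positive orthants of $\mathbb{R}^k$: otherwise it would be the union of its two mutually opposite intersections with those orthants, which are closed convex cones, and in a $2$-plane two opposite closed convex cones cover the plane only if one of them is a half-plane or the whole plane, hence contains a line through the origin --- impossible, as a line through the origin lying in an orthant is $\{0\}$. So $L$ contains a vector with entries of both signs, hence some $g\in G$ acts on $\sqrt{G}$ by multiplication by $\mu=\pm a/b$ with coprime integers $a,b>1$. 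Then the subgroup of $G$ generated by $g$ together with any non-zero element of $\sqrt{G}$ is isomorphic to $\overline{BS}(m,n)$ with $m=\min(a,b)>1$, which is not finitely presentable \cite{Gi79}. Hence $G$ is incoherent.

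For the ``if'' direction, assume $G$ is $FP_2$ with $h(\sqrt{G})\geq2$. If $h(\sqrt{G})=3$ then $G$ is polycyclic by Theorem~\ref{h=3}, and polycyclic groups are coherent, so assume $h(\sqrt{G})=2$. As in the proof of Corollary~\ref{minimax}, $G$ has a subgroup $K=\sqrt{G}\rtimes\mathbb{Z}$ of index at most $2$; write $\mathbb{Z}=\langle s\rangle$. Since coherence is inherited by finite-index overgroups --- a finitely generated $H\leq G$ meets $K$ in a finite-index, hence finitely generated, subgroup, and a finitely generated group is finitely presentable exactly when some finite-index subgroup is --- it suffices to show $K$ is coherent. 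Now $K$ is finitely generated metabelian and $FP_2$ (the latter because $[G:K]<\infty$), so $\sqrt{G}$ is a finitely generated $\mathbb{Z}[s,s^{-1}]$-module; by Bieri--Strebel \cite{BS80}, for a finitely generated group $N\rtimes\mathbb{Z}$ with $N$ abelian and $\mathbb{Z}=\langle t\rangle$ the conditions ``$FP_2$'', ``finitely presentable'', and ``$N$ is finitely generated over $\mathbb{Z}[t]$ or over $\mathbb{Z}[t^{-1}]$'' are equivalent, so after replacing $s$ by $s^{-1}$ if necessary we may assume $\sqrt{G}$ is finitely generated over $R:=\mathbb{Z}[s]$. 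Let $H\leq K$ be finitely generated. If $H\leq\sqrt{G}$ then $H$ is finitely generated abelian, hence finitely presentable. Otherwise $H=(H\cap\sqrt{G})\rtimes\langle h\rangle$ with $\langle h\rangle\cong\mathbb{Z}$, conjugation by $h$ on $\sqrt{G}$ coinciding with the action of $s^{j}$ for some $j\neq0$; replacing $h$ by $h^{-1}$ we may take $j>0$. Then $H\cap\sqrt{G}$ is a $\mathbb{Z}[s^{j}]$-submodule of $\sqrt{G}$, and since $R$ is free of rank $j$ over the Noetherian ring $\mathbb{Z}[s^{j}]$, the module $\sqrt{G}$, and hence its submodule $H\cap\sqrt{G}$, is finitely generated over $\mathbb{Z}[s^{j}]$. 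Identifying $\mathbb{Z}[s^{j}]$ with $\mathbb{Z}[h]$, the Bieri--Strebel criterion shows $H$ is finitely presentable. Hence $K$, and therefore $G$, is coherent.

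The main obstacle is the case $h(\sqrt{G})=2$ of the ``if'' direction, and in particular the twofold appeal to Bieri--Strebel: first, that the hypothesis $FP_2$ forces the $\mathbb{Z}[s,s^{-1}]$-module $\sqrt{G}$ to be one-sided --- finitely generated over $\mathbb{Z}[s]$ or over $\mathbb{Z}[s^{-1}]$ --- and second, that this one-sidedness is transmitted to every submodule $H\cap\sqrt{G}$ via the Noetherian-descent step (using that $\mathbb{Z}[s]$ is free of finite rank over $\mathbb{Z}[s^{j}]$), which then delivers finite presentability of $H$. One must also be careful with the linear-algebra step in the $h(\sqrt{G})=1$ case, whose whole purpose is to manufacture an element whose exponent vector has entries of both signs and hence a non-finitely-presentable $\overline{BS}(m,n)$ with $m>1$ inside $G$. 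The remaining ingredients --- coherence of polycyclic groups, inheritance of coherence by finite-index overgroups and of $FP_2$ by finite-index subgroups, and the identification $Aut(\mathbb{Z}[\frac1D])\cong\mathbb{Z}[\frac1D]^\times$ --- are standard.
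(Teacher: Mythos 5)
Your proof is correct and follows essentially the same route as the paper: for the ``only if'' direction you locate a non-finitely-presentable $\overline{BS}(m,n)$ with $m,n\neq\pm1$ inside $G$ when $h(\sqrt{G})=1$, and for the ``if'' direction you reduce to $\sqrt{G}\rtimes\mathbb{Z}$ and invoke the Bieri--Strebel one-sided (ascending HNN) criterion. Your version usefully fills in two details the paper leaves implicit --- the convexity argument producing an exponent vector with entries of both signs, and the Noetherian descent over $\mathbb{Z}[s^{j}]$ showing that finitely generated subgroups inherit the ascending structure.
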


\begin{proof}
If $G$ is coherent then it is finitely presentable and hence $FP_2$.

Suppose that $h(\sqrt{G})=1$. 
Then $\sqrt{G}\cong\mathbb{Z}[\frac1D]$ for some $D>1$,
and the image of $G/\sqrt{G}$ in 
$Aut(\sqrt{G})\cong\mathbb{Z}[\frac1D]^\times$ has rank 2.
Hence it contains a proper fraction $\frac{p}q$ with $p,q\not=\pm1$,
and so $G$ has a subgroup isomorphic to $\overline{BS}(p,q)$.
Since this subgroup is not even $FP_2$ \cite{BS78}, 
$G$ is not coherent.

If $h(\sqrt{G})=2$ then we may assume that
$G/\sqrt{G}\cong\mathbb{Z}$.
If, moreover,  $G$ is $FP_2$ then $G$ is an HNN extension with base 
a finitely generated subgroup of $\sqrt{G}$ \cite{BS78},
and the HNN extension is ascending, 
since $G$ is solvable.
Any finitely generated subgroup of $G$ is either a subgroup of the base or 
is itself an ascending HNN extension with finitely generated base, 
and so is finitely presentable.

If $h(\sqrt{G})=3$ then $G$ is polycyclic, 
and every subgroup is finitely presentable.
\end{proof}

It remains an open question whether an $FP_2$ torsion-free solvable group $G$ 
with $h(G)=3$ and $h(\sqrt{G})=1$ must be finitely presentable.
Note also that the argument shows that $G$ is {\it almost coherent}
(finitely generated subgroups are $FP_2$) if and only if it is coherent.

We shall assume next that $h(\sqrt{G})=2$ and that $G/\sqrt{G}\cong\mathbb{Z}$. Since $\mathbb{Q}\otimes\sqrt{G}\cong\mathbb{Q}^2$,
the action of $G/\sqrt{G}$ on $\sqrt{G}$ by conjugation in $G$ 
determines a conjugacy class of matrices $M$ in $GL(2,\mathbb{Q})$.
Hence $G\cong\sqrt{G}\rtimes_M\mathbb{Z}$. 

\begin{lemma}
\label{Zmatrix}
A matrix $M\in{GL(2,\mathbb{Q})}$ is conjugate to an integral matrix if and only if
$\det{M}$ and $tr\, M\in\mathbb{Z}$.
\end{lemma}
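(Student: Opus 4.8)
The plan is to prove both directions, with the forward direction being the routine one.

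For the "only if" direction: if $M$ is conjugate to an integral matrix $N \in GL(2,\mathbb{Z})$, then $\det M = \det N$ and $\operatorname{tr} M = \operatorname{tr} N$ are integers, since trace and determinant are conjugacy invariants. Actually I need $N$ to have integer entries but $N$ only needs to be in $GL(2,\mathbb{Z})$ if... wait, the statement says "integral matrix" — let me reconsider. Hmm, an integral matrix that's invertible over $\mathbb{Q}$. The determinant could be any nonzero integer. So the forward direction: conjugate to integral $\Rightarrow$ trace and det are integers. Trivial.

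For the "if" direction: given $\det M, \operatorname{tr} M \in \mathbb{Z}$, construct an integral matrix conjugate to $M$. The natural approach: $M$ satisfies its characteristic polynomial $\chi(X) = X^2 - (\operatorname{tr} M) X + \det M$, which has integer coefficients. Think of $\mathbb{Q}^2$ as a module over $R = \mathbb{Z}[X]/(\chi(X))$ where $X$ acts as $M$. Then find an $R$-lattice $L \subset \mathbb{Q}^2$ of full rank — e.g., take any $v \in \mathbb{Z}^2$ and let $L = \mathbb{Z}v + \mathbb{Z}(Mv)$; since $M^2 v = (\operatorname{tr} M)(Mv) - (\det M) v \in L$, this $L$ is $M$-invariant. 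If $v, Mv$ are linearly independent over $\mathbb{Q}$, then $L \cong \mathbb{Z}^2$ and $M$ restricted to $L$ is given by an integral matrix in the basis $\{v, Mv\}$ — namely the companion matrix of $\chi$ — and this is conjugate to $M$. If $v, Mv$ are dependent for every $v \in \mathbb{Z}^2$, then $M$ is scalar, $M = \lambda I$; then $\operatorname{tr} M = 2\lambda \in \mathbb{Z}$ and $\det M = \lambda^2 \in \mathbb{Z}$ force $\lambda \in \mathbb{Z}$ (since $\lambda \in \mathbb{Q}$ with $\lambda^2 \in \mathbb{Z}$ implies $\lambda \in \mathbb{Z}$), so $M$ is already integral.

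**The main obstacle** is making sure the lattice $L = \mathbb{Z}v + \mathbb{Z}(Mv)$ really is free of rank $2$ and that the change of basis conjugating $M$ to its action on $L$ is by a rational matrix — both are immediate once $v$ and $Mv$ are $\mathbb{Q}$-independent, since then $\{v, Mv\}$ is a $\mathbb{Q}$-basis of $\mathbb{Q}^2$ and the base-change matrix lies in $GL(2,\mathbb{Q})$. So the only real case distinction is scalar versus non-scalar $M$, and the scalar case is handled by the elementary fact that a rational square root of an integer is an integer. I would write this up in a few lines, emphasizing the companion-matrix description of $M|_L$.
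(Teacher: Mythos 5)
Your proof is correct and follows essentially the same route as the paper's: both take a vector $v$ with $v$ and $Mv$ independent, observe that the lattice $\mathbb{Z}v+\mathbb{Z}Mv$ is $M$-invariant by the Cayley--Hamilton theorem, and read off an integral (companion) matrix in that basis. Your explicit treatment of the scalar case $M=\lambda I$ (via $\lambda\in\mathbb{Q}$, $\lambda^2\in\mathbb{Z}$ forcing $\lambda\in\mathbb{Z}$) is a small point that the paper's proof passes over silently, so no changes are needed.
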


\begin{proof}
These conditions are clearly necessary.
If they hold then the characteristic polynomial is a monic polynomial 
with $\mathbb{Z}$ coefficents.
If $x\in\mathbb{Q}^2$ is not an eigenvector for $M$ then 
the subgroup generated by $x$ and $Mx$ is a lattice.
Since $M$ preserves this lattice, 
by the Cayley-Hamilton Theorem,
 it is conjugate to an integral matrix.
\end{proof}

If  $G$ is finitely generated then $\sqrt{G}$ is finitely generated 
as a $\mathbb{Z}[G/\sqrt{G}]$-module, 
since $\mathbb{Z}[G/\sqrt{G}]$ is noetherian.
It is finitely generated as an abelian group (and so $G$ is polycyclic) 
$\Leftrightarrow{M}$ is conjugate to a matrix in $GL(2,\mathbb{Z})
\Leftrightarrow\det{M}=\pm1$ and $tr\,M\in\mathbb{Z}$.

If $G$ is $FP_2$ then $G$ is an ascending HNN extension with base 
$\mathbb{Z}^2$ (as in Theorem \ref{coherence} above).
Hence $M$ (or $M^{-1}$) must be conjugate to an integral matrix,
and $G$ is finitely presentable.
On the other hand, 
if $G\cong\sqrt{G}\rtimes_M\mathbb{Z}$ and neither $M$ nor $M^{-1}$
is conjugate to an integral matrix then $G$ cannot be $FP_2$.

We conclude this section by giving some examples realizing 
the other possibilities for $G/\sqrt{G}$ allowed for by Theorem \ref{h=3}.
Torsion-free polycyclic groups $G$ with $h(\sqrt{G})=2$ 
are $\mathbb{S}ol^3$-manifold groups.
There are such groups with
$G/\sqrt{G}\cong\mathbb{Z}$, $D_\infty$ or 
$\mathbb{Z}\oplus\mathbb{Z}/2\mathbb{Z}$.
(The examples with $G/\sqrt{G}\cong{D_\infty}$ are fundamental groups 
of the unions of two twisted $I$-bundles over a torus along their boundaries.)

For instance, the group $G$ with presentation
\[
\langle{u,v,y}\mid{uyu^{-1}=y^{-1}},~vyv^{-1}=v^{-2}y^{-1},~v^2=u^2y\rangle
\]
is a generalized free product with amalgamation $A*_CB$ where 
$A=\langle{u,y}\rangle\cong{B}=\langle{v,u^2y}\rangle\cong\pi_1(Kb)$
and $C=\langle{u^2,y}\rangle\cong\mathbb{Z}^2$.
It is clear that $G/C\cong{D_\infty}$, and it is easy to check that $C=\sqrt{G}$.

If $G$ is the group with presentation
\[
\langle{t,x,y}\mid{tx=xt},~tyt^{-1}=y^n,~xyx^{-1}=y^{-1}\rangle
\]
then $\sqrt{G}$ is normally generated by $x^2$ and $y$, so 
$h(\sqrt{G})=2$ and 
$G/\sqrt{G}\cong
\mathbb{Z}\oplus\mathbb{Z}/2\mathbb{Z}$.

If $G/\sqrt{G}\cong{D_\infty}$ then $G$ is generated by $\sqrt{G}$ 
and two elements $u$, $v$ with squares in $\sqrt{G}$. 
The matrices in $GL(2,\mathbb{Q})$ corresponding to the actions 
of $u$ and $v$ have determinant $-1$.
Hence $t=uv$ corresponds to a matrix with determinant 1.
There are finitely generated examples of this type which are not polycyclic.
For instance,
let $F$ be the group with presentation
\[
\langle{u,v,x,y}\mid{u^2=x},~uyu^{-1}=y^{-1},~v^2=xy,~vy^3v^{-1}=x^2y^{-1}
\rangle,
\]
and let $K$ be the normal closure of  the image of $\{x,y\}$ in $F$.
Then $F/K\cong{D_\infty}$ and $K/K'\cong\mathbb{Z}[\frac13]^2$,
and $F/K'$ is torsion-free, solvable and $h(F/K')=3$.

However, 
if such a group $G$ is $FP_2$ then so is the subgroup generated by 
$\sqrt{G}$ and $t$.
Hence this subgroup is an ascending HNN extension 
with finitely generated base $H\leq\sqrt{G}$ \cite{BS78}.
Since $t$ maps $H\cong\mathbb{Z}^2$ into itself and has determinant 1 
it must be an automorphism of $H$, 
and so $G$ is polycyclic.

\section{finitely presentable implies constructible}

In this section we shall show that if  a torsion-free solvable group
$G$ of Hirsch length 3 is finitely presentable 
then it is in fact constructible,
and we shall describe all such groups.

If $G$ is $FP_2$ and $G/G'$ is infinite then $G$ is an HNN extension 
$H*_\varphi$ with finitely generated base $H$ \cite{BS78},
and the extension is ascending since $G$ is solvable.
Clearly $h(H)=h(G)-1=2$, and $c.d.G\leq{c.d.H+1}$.
In our next theorem we shall need the stronger hypothesis that $G$ be finitely presentable.

\begin{theorem}
\label{fpres-constructible}
Let $G$ be a torsion-free solvable group of Hirsch length $3$.
Then $G$ is finitely presentable if and only if it is constructible.
\end{theorem}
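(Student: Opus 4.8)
The reverse implication is the easy one. The trivial group is finitely presentable, and both finite extensions and HNN extensions over finitely generated subgroups preserve finite presentability, so every constructible group is finitely presentable. (Equivalently, by \cite{Kr86} constructibility coincides with the condition $c.d.G=h(G)$, hence with being of type $FP$.) For the forward implication, by the same equivalence of \cite{Kr86} it suffices to show that a finitely presentable such $G$ has $c.d.G=h(G)=3$; since always $h(G)\leq c.d.G\leq h(G)+1$, the point is to rule out $c.d.G=4$. I would argue according to the trichotomy of Theorem~\ref{h=3}.

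If $h(\sqrt G)=3$ then $G$ is polycyclic, hence constructible, and there is nothing to prove. If $h(\sqrt G)=2$, I would use the ascending HNN decomposition $G\cong H*_\varphi$ of \cite{BS78} (available whenever $G$ is $FP_2$ with $G/G'$ infinite), in which $H$ is finitely generated, $h(H)=2$ and $c.d.G\leq c.d.H+1$. When $G/\sqrt G\cong\mathbb{Z}$ the discussion following Lemma~\ref{Zmatrix} already shows that being $FP_2$ forces $M^{\pm1}$ to be conjugate to an integral matrix, so $H\cong\mathbb{Z}^2$ and $c.d.G\leq3$. When $G/\sqrt G\cong D_\infty$, as shown before this theorem the subgroup generated by $\sqrt G$ and $t=uv$ is an ascending HNN extension over $H\cong\mathbb{Z}^2$ on which $t$ acts with determinant $1$, and $G$ is polycyclic. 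When $G/\sqrt G\cong\mathbb{Z}\oplus\mathbb{Z}/2\mathbb{Z}$ one passes to the index-two subgroup $K$ with $K/\sqrt G\cong\mathbb{Z}$, which is finitely presentable with $c.d.K=c.d.G$ since $G$ is torsion-free, and is then covered by the first subcase. In every subcase $G$ is constructible.

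The main case, and the one where finite presentability — rather than merely $FP_2$ — is used, is $h(\sqrt G)=1$. Then $\sqrt G\cong\mathbb{Z}[\frac1D]$ with $D$ divisible by at least two primes, $G/\sqrt G\cong\mathbb{Z}^2$, and $G$ is metabelian with $G'=\sqrt G$; by the structure lemma above, $G\cong\overline{BS}(m,n)\rtimes_\theta\mathbb{Z}$. I would feed this into the Bieri--Strebel theory of finitely presented metabelian groups. Regarding $\sqrt G$ as a module over $\mathbb{Z}[G/G']\cong\mathbb{Z}[\mathbb{Z}^2]$ via conjugation, its Bieri--Strebel complement invariant is the finite union of rays $\{[\lambda_p]:p\mid D\}$ in $S(\mathrm{Hom}(G/G',\mathbb{R}))$, where $\lambda_p\colon G/G'\to\mathbb{Z}$ sends a class to the $p$-adic valuation of the scalar by which it acts; the Bieri--Strebel criterion for finite presentability amounts here to requiring these rays to lie in a common open half-space. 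Choosing a primitive $g\in G/G'$ on which every $\lambda_p$ is positive, $g$ acts on $\sqrt G$ as multiplication by an integer $n$ divisible by every prime of $D$, so the preimage $N$ of $\langle g\rangle$ is a normal subgroup isomorphic to $BS(1,n)$ with $G/N\cong\mathbb{Z}$. Hence $G\cong BS(1,n)\rtimes\mathbb{Z}$; since $BS(1,n)$ has cohomological dimension $2$, this gives $c.d.G\leq3$, so $G$ is constructible. (As $G$ is also minimax by Corollary~\ref{minimax}, one may instead argue that finite presentability of this metabelian minimax group upgrades to type $FP_\infty$, hence to type $FP$ since $c.d.G<\infty$, hence to $c.d.G=h(G)$.)

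The step I expect to be the main obstacle is exactly this last case. For $h(\sqrt G)\geq2$ the weaker hypothesis $FP_2$ already suffices, whereas for $h(\sqrt G)=1$ it does not — this is the open problem recorded after Theorem~\ref{coherence}. Concretely, the base $H$ of an ascending HNN decomposition of such a $G$ can be a group $\overline{BS}(a,b)$ with $a,b>1$, which has cohomological dimension $3$ and is not even $FP_2$, so that $c.d.G\leq c.d.H+1$ only yields $c.d.G\leq4$; the point is to use the full strength of finite presentability, via the Bieri--Strebel invariant, to exclude such $H$ and extract instead the decomposition $G\cong BS(1,n)\rtimes\mathbb{Z}$. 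Finally, collecting the outcomes of the three cases — polycyclic, $BS(1,n)\rtimes\mathbb{Z}$, or a properly ascending HNN extension with base $\mathbb{Z}^2$ or $\pi_1(Kb)$ — gives the structural description accompanying the equivalence.
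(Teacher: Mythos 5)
Your treatment of the easy direction and of the cases $h(\sqrt G)\geq2$ is sound and essentially parallels the paper (ascending HNN base $\mathbb{Z}^2$ when $G/\sqrt G\cong\mathbb{Z}$, passage to an index-two subgroup otherwise, polycyclic when $h(\sqrt G)=3$). The genuine gap is in the main case $h(\sqrt G)=1$, and it is exactly where you predicted the difficulty would lie: you have misquoted the Bieri--Strebel criterion. Their theorem on finitely presented metabelian groups (the 1980 \emph{Valuations and finitely presented metabelian groups} paper, not the cited \cite{BS78}) characterizes finite presentability by the condition that $\Sigma^c$ contain \emph{no pair of antipodal points}, not by the condition that $\Sigma^c$ lie in a common open half-space. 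The two conditions coincide only when $\Sigma^c$ has at most two points, i.e.\ when $D$ has at most two prime divisors; for three or more primes they genuinely differ, and your extraction of a primitive $g\in G/G'$ with $\lambda_p(g)>0$ for every $p\mid D$ (hence of $G\cong BS(1,n)\rtimes\mathbb{Z}$) uses the half-space condition in an essential way. Your parenthetical fallback fails for the same reason: finite presentability of a metabelian minimax group does \emph{not} upgrade to type $FP_\infty$; by {\AA}berg's solution of the $FP_m$-conjecture in finite rank, $FP_m$ corresponds to $m$-tameness, and $2$-tame does not imply $3$-tame.

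The gap is not repairable by your method, and in fact it points at a real problem with the statement. Let $G=\mathbb{Z}[\frac1{30}]\rtimes\mathbb{Z}^2$ with the two generators acting as multiplication by $\frac23$ and $\frac35$. Then $G$ is a finitely generated torsion-free metabelian group with $h(G)=3$, $\sqrt G=\mathbb{Z}[\frac1{30}]$, and $\Sigma^c$ consists of the three rays through $(1,0)$, $(-1,1)$, $(0,-1)$: no two are antipodal, so $G$ is finitely presentable, yet they span no open half-space, the image of $G/\sqrt G$ in $\mathbb{Q}^\times$ contains no integer other than $\pm1$, and so $G$ contains no subgroup $BS(1,n)$ of the kind required by Theorem \ref{list constructible}(1); one can also check $G$ is not $3$-tame, hence not of type $FP_3$, hence not constructible. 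So your argument proves something that this example appears to refute. For comparison, the paper avoids Bieri--Strebel entirely and instead manipulates a finite presentation directly, reducing to the three-relator presentation $\langle t,u,\alpha\mid t\alpha^mt^{-1}=\alpha^n,\ u\alpha^pu^{-1}=\alpha^q,\ [u,t]=\alpha^c\rangle$ and asserting that $\langle t,\alpha\rangle\cong BS(m,n)$ to force $m$ or $n=1$; but that assertion is also suspect (conjugating the first relation by $u$ imposes extra relations such as $t\alpha^{mq/p}t^{-1}=\alpha^{nq/p}$ on $\langle t,\alpha\rangle$, which by Britton's lemma do not hold in $BS(m,n)$), and the same example tests it. Any correct treatment of the case $h(\sqrt G)=1$ must confront the configurations of three or more valuation rays that are pairwise non-antipodal but not contained in a half-space, which neither your proposal nor, as far as I can see, the paper's own argument currently does.
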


\begin{proof}
If $G$ is constructible then it is finitely presentable.
Assume that $G$ is finitely presentable.
If $\sqrt{G}$ has rank 1 then $G$ has a presentation
\[
\langle{a,t,u}\mid {ta^mt^{-1}=a^n},~ua^pu^{-1}=a^q,~utu^{-1}t^{-1}=C(a,t,u),~
R~
\rangle,
\]
for some nonzero $m,n,p,q$ with $(m,n)=(p,q)=1$ 
and word $C(a,t,u)$ of weight 0 in each of $t$ and $u$,
and some finite set of relators $R$.
Let $D$ be the product of the prime factors of $mnpq$.
Then $\sqrt{G}\cong\mathbb{Z}[\frac1D]$,
and contains the image of $c$ in $G$.
As observed after Corollary \ref{minimax}, we may assume that
$p$ and $q$ divide $m$ and $n$, respectively
and that $mn$ has a prime factor which does not divide $pq$.

We may assume that each of the relations in $R$ has weight 0 in each of $t$ and $u$.
Then we may write $C(a,t,u)$ and each relator in $R$ as 
a product of conjugates $b_{i,j}=t^iu^jau^{-j}t^{-i}$ of $a$.
Since $R$ is finite the exponents $i,j$ involved lie in a finite range 
$[-L,L]$, for some $L\geq0$.
The relations imply that the normal closure of the image of 
$a$ in $G$ is $\sqrt{G}\cong\mathbb{Z}[\frac1D]$.
Hence the images of the $b_{i,j}$s in $G$ commute,
and are powers of an element $\alpha$ represented by a word $w=W(a,t,u)$
which is a product of powers of (some of) the $b_{i,j}$s.
In particular, $a=\alpha^N$ and  $b_{i,j}=\alpha^{e(i,j)}$,
for some exponents $N$ and $e(i,j)$. 
Clearly $N=e(0,0)$.

It follows also that $t\alpha^mt^{-1}=\alpha^n$ and $u\alpha^pu^{-1}=\alpha^q$.
Hence adjoining a new generator $\alpha$ and new relations 
\begin{enumerate}
\item$a=\alpha^N$; 
\item$t\alpha^mt^{-1}=\alpha^n$;
\item$u\alpha^pu^{-1}=\alpha^q$; 
\item$\alpha=W(a,t,u)$;
and 
\item
$t^iu^jau^{-j}t^{-i}=\alpha^{e(i,j)}$, for all $i,j\in[-L,L]$.
\end{enumerate}
gives an equivalent presentation.

We may use the first relation to eliminate the generator $a$.
Since the image of $\alpha$ in $G$ generates an infinite cyclic subgroup,
the relations $R$ must be consequences of these,
and so we may delete the relations in $R$.
Moreover the relation $\alpha=W(a,t,u)$ collapses to a tautology,
and so may also be deleted, 
and we may use the final set of relations to write $C(a,t,u)$ as a power of $\alpha$.
Since $tb_{i,j}t^{-1}=b_{i+1,j}$ and $ub_{i,j}u^{-1}=b_{i,j+1}$,
we see that $e(i,j)=(\frac{n}m)^i(\frac{q}p)^je(0,0)$, for all $i,j\in[-L,L]$.
Since $\alpha$ generates the subgroup spanned by the $b_{i,j}$s it follows that
$N=(mnpq)^L$ and $e(i,j)=N(\frac{n}m)^i(\frac{q}p)^j$ for $i,j\in[-L,L]$.
Hence the final set of relations are consequences of the second and third relations.

Thus $G$ has the finite presentation
\[
\langle{t,u,\alpha}\mid {t\alpha^mt^{-1}=\alpha^n},~u\alpha^pu^{-1}=\alpha^q,
~utu^{-1}t^{-1}=\alpha^c\rangle,
\]
for some $c\in\mathbb{Z}$.
Since the subgroup generated by the images of $t$ and $\alpha$ is
isomorphic to $BS(m,n)$ and is solvable, either $m$ or $n=1$ \cite{BS78}.

If $h(\sqrt{G})=2$ then $G$ has a subgroup $J$ of index $\leq2$
which is an ascending HNN extension with finitely generated base 
$H\leq{\sqrt{G}}$.
Since $h(H)=2$, we have $H\cong\mathbb{Z}^2$.
Hence  $J$ is constructible, and $G$ is also constructible.

If $h(\sqrt{G})=3$ then $G$ is virtually nilpotent,
and so is again constructible.
\end{proof}

\begin{theorem}
\label{list constructible}
Let $G$ be a torsion-free elementary amenable group of Hirsch length $3$.
Then $G$ is constructible if and only if either
\begin{enumerate}
\item$G\cong{BS(1,n)\rtimes_\theta\mathbb{Z}}$ for some 
$n\not=0$ or $\pm1$  and some $\theta\in{Aut(BS(1,n))}$;
\item$G\cong{H}*_\varphi$ is a properly ascending HNN extension with base 
$H\cong\mathbb{Z}^2$ or $\pi_1(Kb)$; or
\item$G$ is polycyclic.
\end{enumerate}
\end{theorem}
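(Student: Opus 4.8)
The plan is to prove both implications by splitting on $h(\sqrt{G})$ and, when $h(\sqrt{G})=2$, on the isomorphism type of $G/\sqrt{G}$ given by Theorem~\ref{h=3}, using Theorem~\ref{fpres-constructible} to replace ``constructible'' by ``finitely presentable'' in the reverse direction. For the forward direction, recall that the class of constructible groups is closed under HNN extensions and finite extensions and contains $\mathbb{Z}$ (an HNN extension of the trivial group); building up a subnormal series with cyclic sections --- mapping tori for the $\mathbb{Z}$-sections and finite extensions for the finite ones --- shows that every polycyclic group is constructible. In particular $\mathbb{Z}^2$ and $\pi_1(Kb)$ are constructible, so any ascending HNN extension with either as base is constructible, and $BS(1,n)$ is an ascending HNN extension of $\mathbb{Z}$, so $BS(1,n)\rtimes_\theta\mathbb{Z}$, being the mapping torus of $\theta$, is constructible. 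Hence each of (1)--(3) yields a constructible group.

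For the converse, a constructible group is finitely presentable, so $G$ is a finitely presentable torsion-free solvable group of Hirsch length $3$ and Theorem~\ref{h=3} applies. If $h(\sqrt{G})=3$ then $G$ is finitely generated, torsion-free and virtually nilpotent of Hirsch length $3$, hence polycyclic: case~(3). If $h(\sqrt{G})=1$, I would use the finite presentation constructed in the proof of Theorem~\ref{fpres-constructible}, in which $m$ or $n$ equals $1$ once one normalises so that $p\mid m$, $q\mid n$ and $mn$ has at least two distinct prime factors; inverting $t$ if necessary, the normal subgroup $\langle\sqrt{G},t\rangle$ is then isomorphic to $BS(1,N)$ with $N=mn$, one has $G/\langle\sqrt{G},t\rangle\cong\mathbb{Z}$, this extension splits, and $|N|\ge 6$, so $G\cong BS(1,N)\rtimes_\theta\mathbb{Z}$: case~(1).

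It remains to treat $h(\sqrt{G})=2$, where $\sqrt{G}$ is abelian of rank $2$ and $G/\sqrt{G}$ is $\mathbb{Z}$, $D_\infty$ or $\mathbb{Z}\oplus\mathbb{Z}/2\mathbb{Z}$. Let $K\le G$ have index $\le 2$ with $K/\sqrt{G}\cong\mathbb{Z}$, so $K\cong\sqrt{G}\rtimes_M\mathbb{Z}$; as $K$ is $FP_2$, Lemma~\ref{Zmatrix} and the remarks following it show that $M$ or $M^{-1}$ is conjugate to an integral matrix and that $K$ is an ascending HNN extension with base $\mathbb{Z}^2$. If $\det M=\pm1$ then $K$, hence $G$, is polycyclic: case~(3). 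So assume $|\det M|\ge 2$, so that $G$ is not polycyclic. Then $G/\sqrt{G}\cong D_\infty$ is impossible, by the argument at the end of Section~3 (an $FP_2$ group with $G/\sqrt{G}\cong D_\infty$ is polycyclic), and if $G/\sqrt{G}\cong\mathbb{Z}$ then $G=\sqrt{G}\rtimes_M\mathbb{Z}$ with $\det M\neq\pm1$ is a properly ascending HNN extension with base $\mathbb{Z}^2$: case~(2).

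The remaining case $G/\sqrt{G}\cong\mathbb{Z}\oplus\mathbb{Z}/2\mathbb{Z}$ carries the real work. After replacing $t$ by $t^{-1}$ if necessary, take the monodromy $M$ of $K$ integral, so that $\sqrt{G}=\bigcup_{k\ge0}M^{-k}\Lambda$ where $\Lambda\cong\mathbb{Z}^2$ is the HNN base, $M\Lambda\subseteq\Lambda$, and $[\Lambda:M\Lambda]=|\det M|\ge 2$. Choose $x\in G$ representing the $\mathbb{Z}/2\mathbb{Z}$ factor; then $x^2\in\sqrt{G}$, conjugation by $x$ induces an automorphism $B$ of $\sqrt{G}$ with $B^2=1$ that commutes with $M$ (since $[t,x]\in\sqrt{G}$), and $\det B=-1$ (the fixed subgroup of $B$ contains $x^2\neq 1$, while $B=1$ would force $x\in C_G(\sqrt{G})=\sqrt{G}$). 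Picking $k$ large enough that $M^{-k}\Lambda$ contains both $x^2$ and the commutator $txt^{-1}x^{-1}$, set $L^{*}:=M^{-k}\Lambda+B(M^{-k}\Lambda)$; then $L^{*}$ is a finitely generated rank-$2$ sublattice of $\sqrt{G}$ that is $B$-invariant, is $M$-stable with $[L^{*}:ML^{*}]=|\det M|\ge 2$, and satisfies $\bigcup_{j\ge0}M^{-j}L^{*}=\sqrt{G}$. The subgroup $H:=\langle L^{*},x\rangle$ is then a torsion-free extension of $\mathbb{Z}/2\mathbb{Z}$ by $L^{*}\cong\mathbb{Z}^2$ with non-trivial action, hence $H\cong\pi_1(Kb)$; since $L^{*}$ contains $x^2$ and the commutator, one gets $\langle H,t\rangle=G$ and $tHt^{-1}\subseteq H$ with index $|\det M|\ge 2$, exhibiting $G$ as a properly ascending HNN extension with base $\pi_1(Kb)$: case~(2). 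I expect this last construction to be the main obstacle: the base lattice must be simultaneously stable under the contracting endomorphism $M$, invariant under the involution $B$, and large enough to carry the data $x^2$ and $txt^{-1}x^{-1}$ defining the extension of $\mathbb{Z}/2\mathbb{Z}$ by $\sqrt{G}$, while remaining finitely generated --- and what makes this possible is that the lattices $M^{-k}\Lambda$ produced by the HNN decomposition of $K$ are automatically $M$-stable, so one may pass to a high stage and then symmetrise over $B$.
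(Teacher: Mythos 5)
Your proof is correct, and the overall case division (on $h(\sqrt{G})$ and, when $h(\sqrt{G})=2$, on $G/\sqrt{G}$) matches the paper's, but the engine driving the converse is genuinely different. The paper exploits the Baumslag--Bieri structure theory of constructible groups directly: a constructible $G$ of Hirsch length $3$ has a finite-index subgroup which is an ascending HNN extension over a constructible solvable group of Hirsch length $2$, and the latter must be some $BS(1,m)$; the case $|m|>1$ gives type (1) after the change of stable letter $s=tu$, and $m=\pm1$ gives base $\mathbb{Z}^2$ or $\pi_1(Kb)$, the reduction from ``finite index'' to ``$J=G$'' being dispatched by citing Theorem \ref{h=3} and the end-of-\S3 observations. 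You instead convert ``constructible'' to ``finitely presentable'' via Theorem \ref{fpres-constructible} and then work from the Bieri--Strebel HNN decomposition: for $h(\sqrt{G})=1$ you reuse the explicit presentation from that theorem's proof and identify $\langle\sqrt{G},t\rangle\cong BS(1,N)$ (which is fine, since after the normalisation $D$ is the radical of $N=mn$, so $\sqrt{G}\cong\mathbb{Z}[\frac1N]$), and for $h(\sqrt{G})=2$ with $G/\sqrt{G}\cong\mathbb{Z}\oplus\mathbb{Z}/2\mathbb{Z}$ you construct the $\pi_1(Kb)$ base by hand via the $B$-symmetrised lattice $L^*=M^{-k}\Lambda+B(M^{-k}\Lambda)$ chosen large enough to contain $x^2$ and $[t,x]$. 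That last construction is sound ($B$ commutes with $M$ because $[t,x]\in\sqrt{G}$ and $\sqrt{G}$ is abelian, $\det B=-1$ since $B\neq1$ fixes $x^2\neq1$, and the index count $[H:tHt^{-1}]=|\det M|$ goes through), and it makes explicit a point the paper passes over rather briskly; the trade-off is that your route leans on the full strength of Theorem \ref{fpres-constructible}, whereas the paper's argument extracts the list directly from the inductive definition of constructibility and is correspondingly shorter.
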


\begin{proof}
It shall suffice to show that if $G$ is constructible then it is one of the groups
listed here, as they are all clearly constructible.
We may also assume that $G$ is not polycyclic, 
and so $h(\sqrt{G})=1$ or 2.

Since $G$ is constructible it has a subgroup $J$
of finite index which is an ascending HNN extension with base 
a constructible solvable group of Hirsch length 2. 
Since $G$ is not polycyclic, we may assume that $J=G$, by Theorem \ref{h=3}
(when $h(\sqrt{G})=1$) and by Theorem \ref{h=3} with the observations 
towards the end of \S3 (when $h(\sqrt{G})=2$).
Constructible solvable groups of Hirsch length 2 are in turn Baumslag-Solitar 
groups $BS(1,m)$ with $m\not=0$.

If $h(\sqrt{G})=1$ then $|m|>1$ and $G\cong{BS(1,m)}*_\varphi$, 
for some injective endomorphism of $BS(1,m)$. 
We shall use the presentation for $BS(1,m)$ given in \S2. 
After replacing $a$ by $t^{-k}at^k$, if necessary,
we may assume that $\varphi(a)=a^q$ and $\varphi(t)=ta^r$, 
for some $q\not=0$ and $r$ in $\mathbb{Z}$.
Then $G$ has a presentation
\[
\langle{a,t,u}\mid {tat^{-1}=a^m},~uau^{-1}=a^q,~utu^{-1}=ta^r\rangle.
\]
Let $s=tu$ and $n=mq$. Then $sas^{-1}=a^n$, and 
the subgroup $H\cong{BS(1,n)}$ generated by $a$ and $s$ is normal in $G$.
Conjugation by $u$ generates an automorphism $\theta$ of $H$,
since $q$ is invertible in $\mathbb{Z}[\frac1n]$.
Hence $G\cong{BS(1,n)}\rtimes_\theta\mathbb{Z}$, and so $G$ is of type (1).

If $h(\sqrt{G})=2$ then $m=\pm1$,  and so 
$H\cong\mathbb{Z}^2$ or $\pi_1(Kb)$.
Since the HNN extension is properly ascending,
$G$ is not polycyclic, and so $G$ is of type (2).
\end{proof}

We have allowed an overlap between classes (1) and (2) 
in Theorem \ref{list constructible}, 
for simplicity of formulation.
Polycyclic groups of Hirsch length 3 
are virtually semidirect products $\mathbb{Z}^2\rtimes\mathbb{Z}$.
Such semidirect products are ascending HNN extensions, 
but the extensions are not properly ascending, 
and so classes (2) and (3) are disjoint.

Taking into account the fact that solvable groups $G$
with $c.d.G=h(G)$ are constructible \cite{Kr86},
we may summarize the above two theorems as follows.

\begin{cor}
If $G$ is a torsion-free elementary amenable group of
Hirsch length $3$ then $c.d.G=3~\Leftrightarrow~G$ is constructible
$\Leftrightarrow~G$ is finitely presentable $\Leftrightarrow~G$ is one 
of the groups listed in Theorem \ref{list constructible} above.
\qed
\end{cor}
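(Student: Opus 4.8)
The plan is to deduce the corollary by stitching together the results already established, so the work is entirely bookkeeping about which hypotheses each quoted statement needs. Recall from \S1 that a nontrivial torsion-free elementary amenable group of Hirsch length $h$ satisfies $h\leq c.d.G\leq h+1$; since here $h(G)=3$, the condition $c.d.G=3$ is the same as the condition $c.d.G=h(G)$.

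First I would establish $c.d.G=3\Leftrightarrow G$ is constructible. The group $G$ is torsion-free and virtually solvable by \cite{HL92}, so the result quoted in \S1 as \cite{Kr86} applies and gives $c.d.G=h(G)\Leftrightarrow G$ is of type $FP\Leftrightarrow G$ is constructible; together with the identification of $c.d.G=3$ with $c.d.G=h(G)$ this is the first equivalence (and en route it also records that constructible $\Leftrightarrow$ type $FP$). Next, constructible $\Rightarrow$ finitely presentable is immediate, since the trivial group is finitely presentable and the class of finitely presentable groups is closed under finite extensions and under HNN extensions with finitely presentable base; and for the converse one invokes Theorem \ref{fpres-constructible}. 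To apply that theorem one needs $G$ to be solvable of Hirsch length $3$: the Hirsch length is given, and solvability follows from Theorem \ref{h=3}, which shows that $G$ has derived length at most $3$. This closes the loop $c.d.G=3\Leftrightarrow G$ constructible $\Leftrightarrow G$ finitely presentable.

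Finally, Theorem \ref{list constructible} asserts exactly that a torsion-free elementary amenable group of Hirsch length $3$ is constructible if and only if it is of one of the three types displayed there, which supplies the last equivalence and completes the proof. There is essentially no obstacle here; the only point that deserves a moment's attention is the one already flagged above, namely that the blanket hypothesis ``torsion-free elementary amenable of Hirsch length $3$'' is strong enough to feed both \cite{Kr86} (which wants virtual solvability, provided by \cite{HL92}) and Theorem \ref{fpres-constructible} (which wants genuine solvability, provided by Theorem \ref{h=3}).
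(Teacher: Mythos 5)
Your proposal is correct and follows exactly the route the paper intends: the corollary is stated with no written proof beyond the remark that solvable groups with $c.d.G=h(G)$ are constructible \cite{Kr86}, to be combined with Theorems \ref{fpres-constructible} and \ref{list constructible}, which is precisely the bookkeeping you carry out. Your explicit checks (that $c.d.G=3$ coincides with $c.d.G=h(G)$ since $h\leq c.d.G\leq h+1$, and that solvability needed for Theorem \ref{fpres-constructible} comes from Theorem \ref{h=3}) are exactly the details the paper leaves implicit.
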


We conclude with some remarks on realizing such groups 
as fundamental groups of aspherical manifolds.
If $G$ is a finitely presentable group of type $FF$ then there is
a finite $K(G,1)$-complex of dimension $\max\{3,c.d.G\}$
\cite{Wall}.
Thickening such a complex gives a compact aspherical manifold 
of twice the dimension and with fundamental group $G$.
We may define the {\it manifold dimension\/} of $G$ to be 
the miminal dimension $m.d.G$ of such a manifold.
If $c.d.G=h$ (and $h\not=2$) then there is a finite $K(G,1)$-complex of
dimension $h$, and so $m.d.G\leq2h$.
If $G$ is virtually polycyclic then $K(G,1)$ is homotopy equivalent 
to a closed $h$-manifold, and so $m.d.G=h$.
However if $G$ is not virtually polycyclic then $m.d.G>h+1$
\cite{DH22}.

In particular, 
groups of the first two types allowed by Theorem \ref{list constructible}
are not realizable by aspherical 4-manifolds.
The 2-complex associated to the standard 1-relator presentation of $BS(1,m)$ is aspherical, and so $m.d.BS(1,m)\leq4$ (with equality if $m\not=\pm1$).
Surgery arguments show that every automorphism $\theta$ of $BS(1,m)$ 
is induced by a self-homeomorphism $\Theta$ of such a 4-manifold \cite{DH22}.
The mapping torus of $\Theta$ is an aspherical 5-manifold,
and so $m.d.{BS(1,m)}\rtimes_\theta\mathbb{Z}=5$ (if $m\not=\pm1$).
The question remains open for properly ascending HNN extensions with base
$\mathbb{Z}^2$ or $\pi_1(Kb)$.


\end{document}